\title{On uniformly consistent nonparametric tests}
\author{Mikhail Ermakov }
\date{April 2023}
\numberwithin{equation}{section}
\newtheorem{thm}{Theorem}[section]
\newtheorem{corollary}{Corollary}[section]
\begin{document}
\global\long\def\zb{\boldsymbol{z}}
\global\long\def\ub{\boldsymbol{u}}
\global\long\def\vb{\boldsymbol{v}}
\global\long\def\wb{\boldsymbol{w}}
\global\long\def\tb{\boldsymbol{t}}
\global\long\def\eb{\boldsymbol{e}}
\global\long\def\sb{\boldsymbol{s}}
\global\long\def\yb{\boldsymbol{y}}
\global\long\def\thetab{\boldsymbol{\theta}}
\global\long\def\0b{\boldsymbol{0}}
\global\long\def\xib{\boldsymbol{\xi}}
\global\long\def\taub{\boldsymbol{\tau}}
\global\long\def\etab{\boldsymbol{\eta}}
\global\long\def\zetab{\boldsymbol{\zeta}}
\maketitle
Institute of Problems of Mechanical Engineering RAS, Bolshoy pr., 61, VO, 1991178  St. Petersburg and
St. Petersburg State University, Universitetsky pr., 28, Petrodvoretz, 198504 St. Petersburg, RUSSIA

AMS subject classification:
62F03, 62G10, 62G2

keywords :
consistency, uniform consistency,
goodness of fit tests, signal detection.

\maketitle
\begin{abstract} Necessary and sufficient conditions for existence of uniformly consistent tests are explored. A hypothesis is simple. Nonparametric sets of alternatives are bounded convex sets in $\mathbb{L}_p$, $p \ge 2$ with "small" balls   deleted. The "small" balls have the center at the point of hypothesis and radii of balls tend to zero as sample size increases. For problem of hypothesis testing on a density, we show that, for the sets of alternatives, there are  uniformly consistent tests for some sequence of radii of the balls, if and only if, convex set is relatively compact. Similar results are established for problem of signal detection in Gaussian white noise, for linear ill-posed problems  with random Gaussian noise and so on.
\end{abstract}

\section{Introduction}

Compactness assumption is necessary condition for existence of uniformly consistent estimator \cite{ih}. This assumption is also naturally arises if we explore ill-posed problems with deterministic noise \cite{en}. In some sense compactness is rather severe assumption allowing to find uniformly consistent estimator. Now we shall see that the same severe assumption is necessary in  natural setups of nonparametric hypothesis testing.

We  say that family of sets of alternatives are uniformly consistent if, for these sets of alternatives, there is uniformly consistent  family of tests.
Uniform consistency of nonparametric sets  of alternatives has been explored from different viewpoints and different setups. A rather complete bibliography one can find in \cite{er15}.
We explore the problem, if hypothesis is simple and nonparametric sets of alternatives are approaching with hypothesis. The research are based on Le Cam -Schwartz \cite{les}  results.
 Le Cam and Schwartz \cite{les} have found necessary and sufficient conditions for existence of uniformly consistent tests and estimators. These conditions are rather cumbersome and are provided in terms of $n$-fold products of probability measures. A simple form of this conditions arises if probability measures of statistical experiment  are  compact in the topology of set-wise weak convergence, other words, if densities of probability measures are uniformly integrable.

 By Lemma 2.3 in Gaensler \cite{ga}, if set is compact in weak topology of set-wise convergence and there is  weaker topology on this set such that the set is Hausdorff space for this topology then these two topologies coincide on the set. This allows to point out necessary and sufficient conditions of uniform consistency in terms of simple natural weak topologies for widespread statistical models: hypothesis testing on a density, signal detection in Gaussian white noise, hypothesis testing on intensity of Poisson process, hypothesis testing on Gaussian covariance operator and so on (see \cite{er15}). We can consider this research as natural complement to previous interesting  numerous results on consistency of nonparametric tests \cite{bir,dep,er21,ho,ih,is,ja,kr,pf}.

 In \cite{er15}, sets of alternatives did not depend on sample size (power of signal and so on). For a long time the necessary and sufficient conditions of uniform consistency for approaching sets of alternatives remained unclear. Special examples of indistinquishability of hypotheses have been proposed. The first result on indistinguishability  of hypotheses and alternatives has been established Le Cam \cite{le73}. For  hypothesis testing
on a density Le Cam \cite{le73} has shown that the center of the ball and the interior of the ball in $L_1$ are indistinguishable. For the problem of signal detection in Gaussian white noise Ibragimov and Hasminski \cite{ih} have proved  that the center of a ball and the interior of a ball in $L_2$ are indistinguishable. For the same problem Burnashev \cite{bu} established the indistinguishability of
the interior of a ball in $L_p$- spaces, $p>1 $.
  For other setups we mention Ingster \cite{in93}
and Ingster, Kutoyants \cite{ik} works.  Ingster \cite{in93}
and Ingster and Kutoyants \cite{ik} showed  that we could not distinguish the center of a ball and the interior of a ball in $L_2$  in the problems of  hypothesis
testing on a density  and on an intensity function of Poisson process  respectively.

 In  \cite{er21}, we showed that, if sets of alternatives  are bounded center-symmetric convex set with deleted "small" balls in
 $\mathbb{L}_2$ having center at the zero point of hypothesis, then  there are uniformly consistent tests for some sequence of radii of balls tending to zero as sample size increases, if and  only if, the center-symmetric convex set is relatively compact.

 Paper goal is to prove similar results without assumption of center-symmetry of convex set and to extend this result for deleted balls in $\mathbb{L}_p$, $p\ge 2$. The technique allows to explorer  balls from another Banach spaces. However we does not touch these setups.

   \section{Uniformly consistent sets}

 \subsection{ Hypothesis testing on a density}

 Let $X_1,\ldots,X_n$ be i.i.d.r.v.'s having probability measure $\mathbf{P}$   defined on Borel sets of $\sigma$-field $\frak{B}$ assigned on a set $S$. Denote $\cal{P}$ the set of all probability measures on $(S,\frak{B})$.

 For probability measure $\mathbf{P}_0 \in \cal{P}$ denote $\mathbb{L}_p$, $2 \le p < \infty$, the set of all real measurable functions $g$ such that
 \begin{equation*}
 \|g\|_p^p = \int_S |g|^p d \, \mathbf{P}_0 < \infty.
 \end{equation*}
 For each $\rho >0$ and for each $u \in \mathbb{L}_p$, define the ball $B_\rho(u) = \{\,g\,:\, \|g - u\|_p \le \rho,\, g \in \mathbb{L}_p\}$.

 Fix bounded convex set $U \subset \mathbb{L}_p$.

 We say that $u \in U$ is internal  point of convex set $U$ if, for any  function $v \in U$, there is $\lambda < 0$ such that $u + \lambda (v - u) \in U$. Note that traditional topological definition of internal point of set  is different (p.410 \cite{dun}).

 Suppose that $f_0 \equiv 0$ is internal point of $U$.

 We explore problem of testing hypothesis
 \begin{equation*}
 \mathbb{H}_0\,\,:\,\,f \doteq \frac{d \mathbb{P}}{d \mathbb{P}_0} - 1  = f_0 \doteq 0
 \end{equation*}
 versus alternatives
 \begin{equation*}
 \mathbb{H}_n\,\,:\,\, f \in V(\rho_n) = U \setminus B_{\rho_n}(f_0)
 \end{equation*}
 with $\rho_n > 0$, $\rho_n \to 0$ as $n \to \infty$.

 For any test $K_n$ denote $\alpha(K_n) = \mathbf{E}_0[K_n]$ its type I error probability and denote $\beta(K_n,f)$ its type II error probability for alternative $f \in V(\rho_n)$.

 We say that sequence of tests $K_n$, $ 0 < \alpha(K_n,f_0) \le \alpha <1$, is uniformly consistent, if there is $n_0 >0$  and $\delta > 0$ such that, for all $n > n_0$, we have
 \begin{equation*}
 \beta(K_n,V(\rho_n)) \doteq \sup_{f  \in V(\rho_n)} \beta(K_n,f) < 1 - \alpha- \delta
 \end{equation*}
 Otherwise, we say that sequence of tests $K_n$, $ 0 < \alpha(K_n) \le \alpha <1$, is uniformly inconsistent.
 \begin{thm} \label{th1} There is a sequence $\rho_n \to 0$ as $n \to \infty$ such that  sets of alternatives $V(\rho_n)$ is uniformly consistent, if and only if, set $U$ are relatively compact in $\mathbb{L}_p$.
 \end{thm}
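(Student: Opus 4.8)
The statement is a biconditional, so I would prove the two implications separately, and in both I would work with \emph{linear} test statistics of the form $T_n(\phi)=n^{-1}\sum_{i=1}^n \phi(X_i)-\int_S \phi\,d\mathbf{P}_0$, whose expectation under the alternative with density $1+f$ equals $\langle\phi,f\rangle \doteq \int_S \phi f\,d\mathbf{P}_0$ (the centering term drops out because $\int_S f\,d\mathbf{P}_0=0$). The necessity direction I would reduce, in the usual Bayesian fashion, to exhibiting for each $n$ a prior $\pi_n$ supported on $V(\rho_n)$ whose mixture $\mathbf{Q}_n=\int \mathbf{P}_f^{\otimes n}\,\pi_n(df)$ satisfies $\|\mathbf{Q}_n-\mathbf{P}_0^{\otimes n}\|_{TV}\to 0$; then for any test with $\alpha(K_n)\le\alpha$ one has $\sup_{f\in V(\rho_n)}\beta(K_n,f)\ge 1-\mathbf{E}_{\mathbf{Q}_n}[K_n]\ge 1-\alpha-o(1)$, which is precisely uniform inconsistency. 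This reduction is the Le Cam--Schwartz \cite{les} mechanism specialized to the present model.

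For sufficiency, assume $U$ is relatively compact, hence totally bounded in $\mathbb{L}_p$. Fix $\rho>0$. For each $f\in V(\rho)$ duality in $\mathbb{L}_p$, $p>1$, gives a norming functional $\psi_f\in\mathbb{L}_q$, $1/p+1/q=1$, with $\|\psi_f\|_q=1$ and $\langle\psi_f,f\rangle=\|f\|_p> \rho$; replacing $\psi_f$ by a bounded $\mathbb{L}_q$-approximation I would obtain a \emph{bounded} $\phi_f$ with $\langle\phi_f,f\rangle\ge\rho/2$, and by continuity the same $\phi_f$ detects a whole $\mathbb{L}_p$-neighborhood of $f$. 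Total boundedness of $V(\rho)\subset U$ then yields finitely many bounded functions $\phi_1,\dots,\phi_m$, with $m=m(\rho)$, such that $\max_j\langle\phi_j,f\rangle\ge\rho/2$ for every $f\in V(\rho)$. The test that rejects $\mathbb{H}_0$ when $\max_{1\le j\le m}T_n(\phi_j)>\rho/4$ has, by boundedness of the $\phi_j$ and Bernstein's inequality, type I error below any prescribed level and, since under any $f\in V(\rho)$ some $T_n(\phi_j)$ has mean $\ge\rho/2$, type II error tending to $0$ as $n\to\infty$. Running this for a sequence $\rho_k\downarrow 0$ and choosing a block length $n_k\uparrow\infty$ on which the corresponding test already separates, I would set $\rho_n=\rho_k$ for $n_k\le n<n_{k+1}$; this diagonal choice gives $\rho_n\to 0$ together with a uniformly consistent family, because $f\in V(\rho_n)=V(\rho_k)$ still satisfies $\|f\|_p\ge\rho_k$.

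For necessity, assume $U$ is \emph{not} relatively compact, so there are $\varepsilon_0>0$ and a sequence $v_1,v_2,\dots\in U$ with $\|v_k-v_j\|_p\ge\varepsilon_0$ for $k\ne j$; passing to bounded near-representatives, permitted by the standing assumption that each $B_\rho(u)\cap U$ contains a bounded function, I would extract from $\{v_k\}$ an infinite, nearly disjointly supported (equivalently almost $\ell_p$-orthogonal) system of bounded perturbations of controlled $\mathbb{L}_p$-norm. Because $f_0=0$ is an internal point of $U$, each such perturbation can be rescaled and slightly reflected back into $U$, which supplies the substitute for center-symmetry used in \cite{er21}: it lets me place independent random signs on the blocks while keeping the realized density $1+f$ inside $U$ and on the sphere of radius $\ge\rho_n$. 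For the prior $\pi_n$ that selects about $\rho_n^{-p}$ of these blocks and assigns them random signs, the sign randomization kills the linear cross terms and the $\chi^2$-affinity between $\mathbf{Q}_n$ and $\mathbf{P}_0^{\otimes n}$ factorizes over blocks into a product $\prod_k\bigl(1+O(n^{-1}\,\text{(per-block }\mathbb{L}_2\text{ mass)})\bigr)$; spreading a fixed $\mathbb{L}_p$-mass over a growing number of vanishing blocks drives each factor to $1$, so $\chi^2(\mathbf{Q}_n,\mathbf{P}_0^{\otimes n})\to 0$ and a fortiori $\|\mathbf{Q}_n-\mathbf{P}_0^{\otimes n}\|_{TV}\to 0$. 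With the reduction of the first paragraph this proves uniform inconsistency for \emph{every} sequence $\rho_n\to 0$.

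The main obstacle is this necessity construction. In the Hilbert, center-symmetric setting of \cite{er21} the orthogonal blocks and the sign prior make the $\chi^2$ computation exact; here two difficulties compound. First, in $\mathbb{L}_p$ with $p\ne 2$ one has only almost-orthogonality, so I must control the non-Gaussian cross moments of the likelihood ratio and show the off-diagonal contributions are negligible after sign averaging. Second, without symmetry the signed perturbations need not a priori lie in $U$; turning the internal-point hypothesis into a quantitative statement that a fixed-size fraction of reflected blocks stays inside the bounded convex set $U$, while simultaneously keeping $\|f\|_p\ge\rho_n$, is the delicate point on which the whole direction rests. Verifying that both requirements can be met \emph{uniformly} as $\rho_n\to 0$, that is, that non-compactness supplies arbitrarily diffuse admissible alternatives of the prescribed norm, is where I expect the real work to lie.
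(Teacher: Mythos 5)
Your sufficiency half (compactness $\Rightarrow$ existence of uniformly consistent tests) is correct, and it is essentially a self-contained, constructive rendering of what the paper obtains by citing Theorem \ref{th2}: your finite family of bounded functionals $\phi_1,\dots,\phi_m$ is exactly the finite family of separating half-spaces that Le Cam--Schwartz \cite{les} together with Ganssler's lemma \cite{ga} produce, and the Bernstein-plus-diagonalization step is routine. The difficulty is the necessity half, which is where the paper's proof actually lives, and there your plan has a genuine gap at precisely the step you yourself flag as ``where the real work lies.'' Note first that the paper does not argue by the contrapositive at all: it assumes the tests exist and exploits them. By Theorem \ref{th2}, uniform consistency supplies, for each $\varepsilon_i=\varepsilon_1 2^{-(i-1)}$, finitely many half-spaces $\Pi(g_{ij},c_{ij})$ covering $V(\varepsilon_i)$ and avoiding $0$; the internal-point hypothesis is invoked exactly once, to produce a single reflected point $h=\lambda v\in U$ on the far side of $\Gamma_0$; the hyperplanes $\Upsilon_{ij}$ through $h$ and $\Gamma(g_{ij},d_i)\cap\Gamma_0$ carve out nested sets $W_i\supset U\cap\Pi_0$, and the similarity inequality (\ref{pr21}) shows that the Kolmogorov widths of $W=\overline{\cap_i W_i}$ tend to zero, so $W$ is compact and $U$ is relatively compact. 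No prior, no affinity computation, no indistinguishability construction is needed.

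Your mixture route, by contrast, must produce for each $n$ a large family of perturbations that are simultaneously (i) inside the convex set $U$, (ii) of $\mathbb{L}_p$-norm at least $\rho_n$, (iii) pairwise nearly orthogonal in $\mathbb{L}_2$ --- the $\chi^2$ affinity is computed through $\mathbb{L}_2$ inner products, and for $p<2$ boundedness of $U$ in $\mathbb{L}_p$ gives no $\mathbb{L}_2$ control whatsoever, so everything must be routed through bounded representatives you never actually construct --- and (iv) stable under the random sign flips of your prior. Center-symmetry gives (iv) for free, and that is exactly what \cite{er21} exploits; the internal-point condition does not: it yields for each bounded $v\in U$ some $\lambda_v<0$ with $\lambda_v v\in U$, with no lower bound on $|\lambda_v|$ and no uniformity over the infinitely many blocks you must flip simultaneously. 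Convexity lets you average signed, individually rescaled blocks, but the asymmetric rescalings $(+a_k$ versus $-b_k$ with uncontrolled $b_k)$ destroy both the norm lower bound (ii) and the cancellation of cross terms in the $\chi^2$ factorization. So the ``quantitative internal-point statement'' you defer is not a technical verification to be filled in later; it is the whole obstruction that the paper's geometric argument was designed to bypass, by using the reflected point only once and only qualitatively. As written, your necessity direction is a program whose central step is missing, not a proof.
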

 We say that $r_n$ is rates of distinguishability if sets of alternatives $V(\rho_{1n})$ is uniformly inconsistent for any sequence $\rho_{1n} > 0$, $\rho_{1n}/r_n \to 0$ as $n \to \infty$, and
 sets of alternatives $V(\rho_{2n})$ is uniformly consistent for any sequence $\rho_{2n} > 0$, $\rho_{2n}/r_n \to \infty$ as $n \to \infty$.

 In proof of Theorem \ref{th1} we construct set $W$ such  that $U \subset W$ and show that Kolmogorov $n$-width $d_i$, $1\le i < \infty$ of set $W$ tends to zero. The same reasoning we can conduct for any internal point $u \in U$ and construct set $W_u$,  $U \subset W_u$ with Kolmogorov $n$-width $d_{ui}$, $1\le i < \infty$. If we take $h = u$ in proof of Theorem \ref{th1} and, by the same reasoning, implementing similarity inequality (\ref{pr21}), we get $c d_i \le d_{ui} \le C d_i$, $1 \le i < \infty$, where coefficients $c, C$ are proportional to  similarity coefficient.

For $\varepsilon > 0$, $c>1$ and $\theta \in K$, denote $M(\varepsilon, c, B_\varepsilon(\theta)\cap K)$,  the largest cardinality of an $\varepsilon/c$ -packing set for $B_\varepsilon(\theta)\cap K$ (see definition 5.4 in \cite{wainb}). We refer to
\begin{equation*}
\frak{E}^{loc}(\theta,c,\varepsilon) =\log M(\varepsilon,c,B_\varepsilon(\theta)\cap K)
 \end{equation*}
as local entropy of set $K$ at the point $\theta$. The same similarity consideration allow to show that  there are $c_1$ and $c_2$ such that, for any interior points $\theta_1 \in K$ and $\theta_2 \in K$ we have
\begin{equation*}
\frak{E}^{loc}(\theta_1,c_2/c_1,c_1\varepsilon) \le
\frak{E}^{loc}(\theta_2,c_2,\varepsilon) \le \frak{E}^{loc}(\theta_2,c_2/c_1, c_1\varepsilon).
 \end{equation*}

 This is the base for the proof that we have the same rates of distinguishability in the problems of hypothesis testing $\mathbb{H}_0\,:\, f = 0$ versus $\mathbb{H}_n \,:\, f \in U \setminus B_{\rho_n}(0)$ and $\mathbb{H}_{0u}\,:\, f = u$ versus $\mathbb{H}_{nu} \,:\, f \in U \setminus B_{\rho_n}(u)$ if $u$ is internal point of $U$.  If  point $u$ depends on $n$, one can get different rates of distinguishability  \cite{wain}.

    \begin{corollary} Let a priori information be provided that $f \in U$ where $U$ is bounded convex set in $\mathbb{L}_p$, $p \ge 2$. Then there is uniformly consistent estimator of  $\|f\|_p$, if and only if,  the set $U$ is relatively compact.
   \end{corollary}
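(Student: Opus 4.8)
The plan is to derive the Corollary from Theorem \ref{th1} through the correspondence between estimating the functional $\|f\|_p$ and testing $f=0$ against alternatives separated from $0$ in $\mathbb{L}_p$. Call $\hat T_n$ a uniformly consistent estimator of $\|f\|_p$ over $U$ if $\sup_{f\in U}\mathbf{P}_f(|\hat T_n-\|f\|_p|>\varepsilon)\to 0$ for every $\varepsilon>0$.

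For the ``only if'' part I would turn such an estimator into a uniformly consistent family of tests for a suitable vanishing sequence. Set $a_n(\varepsilon)=\sup_{f\in U}\mathbf{P}_f(|\hat T_n-\|f\|_p|>\varepsilon)$; since $a_n(\varepsilon)\to 0$ for each fixed $\varepsilon$, a diagonal argument yields $\varepsilon_n\downarrow 0$ with $a_n(\varepsilon_n)\to 0$. Put $\rho_n=3\varepsilon_n$ and $K_n=\mathbf{1}\{\hat T_n>\rho_n/2\}$. Under the hypothesis $\|f\|_p=0$, so $\alpha(K_n,f_0)\le a_n(\varepsilon_n)\to 0$; for any $f\in V(\rho_n)$ one has $\|f\|_p>\rho_n$, hence $\{\hat T_n\le\rho_n/2\}\subset\{|\hat T_n-\|f\|_p|>\varepsilon_n\}$ and $\beta(K_n,V(\rho_n))\le a_n(\varepsilon_n)\to 0$. (A tiny independent randomized rejection keeps $\alpha(K_n,f_0)>0$ as the definition requires.) Thus $V(\rho_n)$ is uniformly consistent for some $\rho_n\to 0$, and the direction of Theorem \ref{th1} asserting necessity of compactness forces $U$ to be relatively compact.

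For the ``if'' part I would, conversely, manufacture the estimator from the testing power supplied by compactness. Relative compactness makes $U$ totally bounded, so for each $\delta>0$ there is a finite $\delta$-net; since $0$ is internal the internal points are dense in $U$, so the net may be taken to consist of internal points $u_1,\dots,u_N$. By the discussion following Theorem \ref{th1} the local entropy $\frak{E}^{loc}$ and the Kolmogorov widths at every internal point $u_j$ are comparable to those at $0$, so the same rates of distinguishability hold at each $u_j$: for $\rho$ of order $\delta$ one obtains tests that, uniformly over $U$, separate a $\rho$-neighborhood of $u_i$ from a $\rho$-neighborhood of $u_j$ whenever $\|u_i-u_j\|_p$ is of larger order than $\delta$. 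Combining these $O(N^2)$ tests by a union bound inside a Le Cam minimum-distance (majority-vote) scheme returns an index $\hat\jmath$ with $\sup_{f\in U}\mathbf{P}_f(\|f-u_{\hat\jmath}\|_p>C\delta)\to 0$; letting $\delta=\delta_n\downarrow 0$ slowly gives a uniformly consistent estimator $\hat f_n=u_{\hat\jmath}$ of $f$ in $\mathbb{L}_p$.

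It then remains to set $\hat T_n=\|\hat f_n\|_p$: since the norm is $1$-Lipschitz, $|\,\|\hat f_n\|_p-\|f\|_p\,|\le\|\hat f_n-f\|_p$, so uniform consistency of $\hat f_n$ in $\mathbb{L}_p$ passes directly to $\hat T_n$ as an estimator of $\|f\|_p$, finishing the sufficiency. The thresholding and the Lipschitz transfer are routine; the real obstacle is the construction in the sufficiency direction, namely upgrading the point-against-far tests of Theorem \ref{th1} to a robust classification over the net. Two issues must be faced: Theorem \ref{th1} controls behavior only at the exact center and on the far alternative set, so the buffer $0<\|f-u_j\|_p\le\rho$ is a priori uncontrolled, and all $O(N^2)$ error probabilities must be kept small simultaneously and uniformly in $f$. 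Both are resolved by the local-entropy comparison quoted after Theorem \ref{th1}, which certifies that the separation problem has the same uniform difficulty at every internal point, so that the pairwise tests can be assembled by a union bound into one uniformly consistent estimator.
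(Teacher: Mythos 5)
The paper itself states this corollary without proof, so the benchmark is whether your argument can be supported by its theorems. Your necessity half can: thresholding a uniformly consistent estimator of $\|f\|_p$ at $\rho_n/2$ with $\rho_n=3\varepsilon_n$ obtained by diagonalization yields tests that are uniformly consistent in the paper's sense for $V(\rho_n)$, and the necessity direction of Theorem \ref{th1} then forces relative compactness; this is surely the intended reduction. The genuine gap is in the sufficiency half, precisely at the step you yourself call ``the real obstacle''. You claim that both difficulties -- the uncontrolled buffer $0<\|f-u_j\|_p\le\rho$ and the need to keep $O(N^2)$ errors small simultaneously -- are resolved by the local-entropy comparison quoted after Theorem \ref{th1}. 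They are not. That discussion compares the difficulty of testing the \emph{simple} hypothesis $f=u$ against $U\setminus B_\rho(u)$ at different internal points $u$; it produces no test whose null hypothesis is a whole neighborhood $B_\rho(u_i)\cap U$, so it cannot control the buffer. Moreover, uniform consistency in this paper (in Theorem \ref{th1} and in Theorem \ref{th2} alike) only means the sum of error probabilities is bounded away from $1$; with errors of that size a union bound over $O(N^2)$ pairwise tests proves nothing.

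Both defects are reparable, but by different tools than the ones you cite. For net points with $\|u_i-u_j\|_p>2\rho$, the sets $W_0=B_\rho(u_i)\cap U$ and $W_1=B_\rho(u_j)\cap U$ are bounded convex sets whose weak closures are disjoint (the weak closure of a convex set is its norm closure, which lies in the corresponding closed ball), so Theorem \ref{th2} supplies uniformly consistent tests for these \emph{composite} hypotheses -- this is what handles the buffer, and it also makes your digression on density of internal points unnecessary, since the net points need not be internal or even lie in $U$. Next, before any union bound, each pairwise test must be amplified: split the sample into blocks, repeat the test on each block, and take a majority vote, so that for fixed $\delta$ both error probabilities tend to $0$ as $n\to\infty$; only then can the minimum-distance selection of $\hat\jmath$, the slow diagonalization $\delta_n\downarrow 0$, and the Lipschitz transfer to $\hat T_n=\|\hat f_n\|_p$ be carried out. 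Alternatively, a shorter route avoids estimating $f$ altogether: since $\bar U$ is norm compact, the weak and norm topologies coincide on it, so for any $s<t$ the sets $\{f\in U:\|f\|_p\le s\}$ and $\{f\in U:\|f\|_p\ge t\}$ have disjoint weak closures; Theorem \ref{th2} tests each such pair, and scanning a finite grid of thresholds (with the same amplification) estimates $\|f\|_p$ directly.
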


        Proof of Theorem \ref{th1} is based on  Theorem \ref{th2} on uniform consistency of set of alternatives that does not depend on $n$.

        For $p > 1$, we explore the problem of testing hypothesis
\begin{equation*}
\mathbb{H}_0 \,\, :\,\, f \in W_0 \subset \mathbb{L}_p
\end{equation*}
versus alternatives
\begin{equation*}
\mathbb{H}_1\,\,:\,\, f \in W_1 \subset \mathbb{L}_p
\end{equation*}
We say that sequence of tests $K_n$ is uniformly consistent if
\begin{equation*}
\lim\sup_{n \to \infty}\,(\sup_{f_0 \in W_0} \alpha(K_n,f_0) + \sup_{f \in W_1} \beta(K_n,f)) < 1
\end{equation*}
\begin{thm}\label{th2} {\sl i.} Let $W_0$ be bounded set in $L_p$, $p>1$. Then there are uniformly consistent tests  if  the closures of $W_0$ and $W_1$ are disjoint in weak topology.

{\sl ii}. The converse holds if $W_1$ is also bounded sets in $L_p$, $p>1$.
\end{thm}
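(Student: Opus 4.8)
The plan is to read the \emph{weak topology} on $\mathbb{L}_p$ here as the topology induced by set-wise (weak) convergence of the associated measures $\mathbf{P}_f=(1+f)\,\mathbf{P}_0$; on densities this is convergence of $\langle f,\psi\rangle=\int f\psi\,d\mathbf{P}_0$ tested against \emph{bounded} $\psi$. The starting observation is that a set bounded in $\mathbb{L}_p$ with $p>1$ is uniformly integrable (a one-line H\"older estimate over the probability space $(S,\mathbf{P}_0)$), so by the uniform-integrability/set-wise-compactness equivalence recalled in the Introduction (Dunford--Pettis together with Gaensler's lemma \cite{ga}) the closure $\overline{W_0}$ is compact in this topology. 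This is the only place where the hypotheses ``bounded'' and ``$p>1$'' on $W_0$ enter, and it is what makes the finite-dimensional reduction below possible.

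For part (i) I would separate the compact set $\overline{W_0}$ from the disjoint closed set $\overline{W_1}$ by finitely many bounded functionals. For each $g\in\overline{W_0}$ choose a basic neighbourhood $\{f:|\langle f-g,\psi_i^g\rangle|<\delta_g\}$ with $\psi_i^g\in\mathbb{L}_\infty$ disjoint from $\overline{W_1}$; extract a finite subcover of the compact $\overline{W_0}$ and collect the functionals into one list $\psi_1,\dots,\psi_M$. Writing $\Psi(f)=(\langle f,\psi_1\rangle,\dots,\langle f,\psi_M\rangle)$, the covering neighbourhoods become an open set $\widetilde N\subset\mathbb{R}^M$ with $\Psi(\overline{W_0})\subset\widetilde N$ and $\Psi(\overline{W_1})\cap\widetilde N=\varnothing$; as $\Psi(\overline{W_0})$ is compact and lies in the open $\widetilde N$, its image is separated from $\Psi(\overline{W_1})$ by some $\varepsilon>0$. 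The test then estimates $\Psi(f)$ by the empirical averages $\widehat\Psi_{n,i}=\frac1n\sum_{j}\psi_i(X_j)-\int\psi_i\,d\mathbf{P}_0$, which are unbiased for $\langle f,\psi_i\rangle$, and rejects $\mathbb{H}_0$ when $\widehat\Psi_n$ leaves the $\varepsilon$-neighbourhood of $\Psi(\overline{W_0})$. Since every $\psi_i$ is bounded, Hoeffding's inequality controls both error probabilities by $\sum_i 2\exp(-c_i n\varepsilon^2)$ \emph{uniformly} in $f$ (in particular uniformly over the possibly unbounded $W_1$), whence $\sup_{W_0}\alpha(K_n,\cdot)+\sup_{W_1}\beta(K_n,\cdot)\to0$.

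For the converse (ii) assume both $W_0$ and $W_1$ bounded, so both closures are compact, and suppose for contradiction that they meet at some $f^{\ast}$ while a uniformly consistent sequence $K_n$ exists, giving $\gamma>0$ with $\inf_{f_1\in W_1}\mathbf{E}^n_{f_1}[K_n]-\sup_{f_0\in W_0}\mathbf{E}^n_{f_0}[K_n]\ge\gamma$ for all large $n$. The key step is the continuity, for each fixed $n$, of $G_n(f)=\mathbf{E}^n_f[K_n]=\int K_n\prod_{i}(1+f(x_i))\,d\mathbf{P}_0^{\,n}$ in this topology. Set-wise convergence $\mathbf{P}_{f_\nu}\to\mathbf{P}_{f^{\ast}}$ on $S$ tensorises to set-wise convergence $\mathbf{P}_{f_\nu}^{\,n}\to\mathbf{P}_{f^{\ast}}^{\,n}$ on $S^{\,n}$: it holds on measurable rectangles by the product formula, and extends to all measurable sets because the product densities satisfy $\|\prod_i(1+f(x_i))\|_{\mathbb{L}_p(\mathbf{P}_0^{\,n})}=\|1+f\|_p^{\,n}$ and are therefore uniformly bounded, hence uniformly integrable (this is where boundedness of $W_1$ is used). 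Since $K_n$ is bounded, $G_n(f_\nu)\to G_n(f^{\ast})$, so $\sup_{W_0}G_n=\sup_{\overline{W_0}}G_n\ge G_n(f^{\ast})\ge\inf_{\overline{W_1}}G_n=\inf_{W_1}G_n$, i.e. $\inf_{W_1}G_n-\sup_{W_0}G_n\le0$, contradicting $\gamma>0$. Hence the closures are disjoint.

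The main obstacle I anticipate is in part (ii): the tensorisation of set-wise convergence to the $n$-fold products, i.e. the continuity of $G_n$. This is precisely what forces the extra hypothesis that $W_1$ be bounded, for only then are the product densities uniformly integrable and the convergence on rectangles propagates; without it $\overline{W_1}$ need not be compact and $G_n$ need not be continuous there. In part (i) the only delicate input is the dual compactness statement, that $\overline{W_0}$ is weak-compact, which Dunford--Pettis supplies once $W_0$ is bounded in $\mathbb{L}_p$, $p>1$; the finite separation by bounded functionals and the uniform concentration are then routine.
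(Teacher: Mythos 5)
Your proof is correct, but it takes a genuinely different route from the paper's. The paper does not argue Theorem~\ref{th2} directly at all: it obtains it in one sentence from the Le Cam--Schwartz characterization of uniform consistency in terms of $n$-fold products (p.~141 of \cite{les}) together with Lemma 2.3 of Gaensler \cite{ga}, as already packaged in Theorem 4.1 of \cite{er15}. You instead reprove both halves from scratch in the concrete $\mathbb{L}_p$ setting. For (i), the chain H\"older $\Rightarrow$ uniform integrability $\Rightarrow$ (Dunford--Pettis) compactness of $\overline{W_0}$ in the set-wise topology, then a finite subcover producing bounded functionals $\psi_1,\dots,\psi_M$ and a Euclidean gap between $\Psi(\overline{W_0})$ and $\Psi(W_1)$, then Hoeffding's inequality for the plug-in test, is an explicit constructive substitute for the abstract existence statement: it yields concrete tests with exponential error bounds and makes transparent why unboundedness of $W_1$ is harmless (only bounded functionals are ever evaluated on it). For (ii), your tensorization step --- set-wise convergence passes to $n$-fold products because the product densities have $\mathbb{L}_p$-norm $\|1+f\|_p^{\,n}$, hence form a uniformly integrable family --- is exactly the content hidden in the Le Cam--Schwartz product condition, and continuity of $f\mapsto \mathbf{E}^n_f[K_n]$ on the compact closures then kills the uniform gap $\gamma$. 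The citation route buys brevity and generality (Le Cam--Schwartz is not tied to dominated $\mathbb{L}_p$ models); your route buys self-containedness and a precise accounting of where each hypothesis enters ($W_0$ bounded and $p>1$ for compactness, $W_1$ bounded only in (ii) for the tensorization). Remaining loose ends are routine: the rejection threshold should be $\varepsilon/2$ rather than $\varepsilon$; the passage from rectangles to general product sets should explicitly invoke approximation by the generating algebra plus the uniform H\"older bound; and one should note that $\overline{W_1}$ stays inside an $\mathbb{L}_p$-ball (balls are set-wise closed by Fatou), so the common point $f^*$ is itself a density with $\|1+f^*\|_p$ bounded.

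One further point in your favor deserves emphasis: reading ``weak topology'' as the topology of set-wise convergence (bounded test functions) rather than $\sigma(\mathbb{L}_p,\mathbb{L}_q)$ is not merely an interpretive convenience --- for part (i) it is forced. On norm-bounded sets the two topologies coincide (truncate $g\in\mathbb{L}_q$ and use the uniform norm bound), so part (ii) and the application in the proof of Theorem~\ref{th1} are insensitive to the choice; but part (i) allows $W_1$ unbounded, and there the $\sigma(\mathbb{L}_p,\mathbb{L}_q)$ reading makes the statement false. Indeed, on $[0,1]$ with $p=q=2$ take $W_0=\{0\}$ and $W_1=\{f_m\}$, where $f_m=m^{3/5}\mathbf{1}_{[0,1/m]}-b_m\mathbf{1}_{(1/m,1]}$ with $b_m$ chosen so that $\int f_m\,d\mathbf{P}_0=0$: then $\int f_m(x)\,x^{-2/5}\,dx$ stays bounded away from zero with $x^{-2/5}\in\mathbb{L}_2$, so the $\sigma(\mathbb{L}_2,\mathbb{L}_2)$-closures are disjoint, yet $\|f_m\|_1\to0$, so for every fixed $n$ the products $\mathbf{P}^n_{f_m}$ merge with $\mathbf{P}^n_0$ in total variation and no uniformly consistent tests exist. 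In your set-wise topology $f_m\to0$, so your hypothesis correctly excludes this example; this confirms that your interpretation, which also matches the Le Cam--Schwartz/Gaensler framework the paper invokes, is the right one.
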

Theorem \ref{th2} follows from Le Cam, Schwartz (p.141, \cite{les})  and Lemma 2.3 in Ganssler \cite{ga} (see Theorem 4.1 in \cite{er15} as well).

If closures in weak topology of sets $W_0$ and $W_1$ are disjoint then there is a finite number of functions $g_1, \ldots, g_k \in \mathbb{L}_q$, $ \frac{1}{p} + \frac{1}{q} =1$, and constants $c_1, \ldots,c_k$ such that, for any $u \in W_0$ there holds $c_i\, >\, \int_S\, u\, g_i\, d\, \mathbf{P}_0$ for all $i$, $1 \le i \le k$, and, for any $v \in W_1$, there is index $i$, $1 \le i \le k$, such that
$\int_S\, v\, g_i\, d\, \mathbf{P}_0\, >\,c_i$.
\subsection{Hypothesis testing on intensity function of Poisson random process \label{s2.3}} Let we be given $n$ independent realizations $\kappa_1,\ldots,\kappa_n$ of Poisson random process with mean measure $\mathbf{P}$  defined on Borel sets of $\sigma$-field $\frak{B}$ assigned on set $S$. Suppose mean measure $\mathbf{P}$ has intensity function $h(t) = \frac{d \mathbf{P}}{d \nu}(t)$, $t \in S$. Here $\nu$ is a measure  defined on Borel sets of $\sigma$-field $\frak{B}$ as well and $\nu(S) < \infty$, $\mathbf{P}(S) < \infty$.

Problem is to test a hypothesis $H_0\,:\, f(t)\doteq f_0 = h(t) -1 \equiv 0$ versus  alternatives $H_n\,:\, f \in V(\rho_n) \subset \mathbb{L}_p$, $p \ge 2$. The definition of sets $V(\rho_n)$, $\rho_n > 0$, is the same as in previous subsection with the unique difference that the densities are replaced with intensity functions. The definition of internal point of convex set $U$ is also the same as in the previous subsection.
\begin{thm} \label{thp1}  Let $ U$ be bounded convex set in $\mathbb{L}_p$, $p \ge 2$. Let $f_0$ be internal point of $U$. Then there is sequence $\rho_n \to 0$ as $n \to \infty$ such that  sets of alternatives $V(\rho_n)$ are uniformly consistent, if and only if,  set $U$ is relatively compact.
 \end{thm}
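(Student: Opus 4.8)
The plan is to run the proof of Theorem \ref{th1} verbatim, replacing the $n$-fold product experiment of i.i.d. observations by the $n$-fold Poisson experiment, and to show that the latter fits the same Le Cam--Schwartz / Ganssler framework with the same affinity asymptotics. Write $\mathbb{P}^{(n)}_f$ for the law of $(\kappa_1,\ldots,\kappa_n)$ under intensity $1+f$ and $\mathbb{P}^{(n)}_0$ for the null law $f\equiv 0$. The whole argument hinges on the Poisson Hellinger affinity identity
\begin{equation*}
\int \sqrt{d\mathbb{P}^{(n)}_f\, d\mathbb{P}^{(n)}_0}
= \exp\!\left(-\frac{n}{2}\int_S \bigl(\sqrt{1+f}-1\bigr)^2\, d\nu\right),
\end{equation*}
which plays exactly the role that the product Hellinger affinity plays in the density case: distinguishability is governed to leading order by the quadratic form $\int_S(\sqrt{1+f}-1)^2\,d\nu$, and on the bounded representatives of $U$ one has $\int_S(\sqrt{1+f}-1)^2\,d\nu \le C\int_S f^2\,d\nu$. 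Thus the Poisson problem and the density problem of Theorem \ref{th1} share the same local geometry, and in particular the same rates of distinguishability recorded after Theorem \ref{th1}.

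First I would establish the Poisson analogue of Theorem \ref{th2}: for bounded $W_0,W_1\subset\mathbb{L}_p$, $p>1$, uniformly consistent tests exist if and only if the weak closures of $W_0$ and $W_1$ are disjoint. As for Theorem \ref{th2} this follows from Le Cam--Schwartz (\cite{les}) and Lemma 2.3 of Ganssler (\cite{ga}), the one genuinely new point being that the family $\{\mathbb{P}^{(n)}_f:f\in U\}$ is relatively compact in the topology of setwise convergence, i.e. that its likelihoods are uniformly integrable. This I would verify through the Poisson moment formula $\mathbf{E}_0\bigl[(d\mathbb{P}^{(n)}_f/d\mathbb{P}^{(n)}_0)^r\bigr]=\exp\bigl(n\int_S((1+f)^r-1-rf)\,d\nu\bigr)$ for $r>1$ close to $1$: since $(1+f)^r-1-rf\le C_r f^2$ on the bounded representatives, uniform integrability reduces to boundedness of $\int_S f^2\,d\nu$ over $U$, which follows from $\mathbb{L}_p$-boundedness together with the bounded-function assumption of the setup. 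As in the remark after Theorem \ref{th2}, disjointness of weak closures then yields finitely many separating functionals $g_1,\ldots,g_k\in\mathbb{L}_q$, realized here by the counting statistics $n^{-1}\sum_{j}\int_S g_i\,d\kappa_j$.

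With this in hand both implications mirror Theorem \ref{th1}. For the direction ``uniformly consistent $\Rightarrow$ relatively compact'' (the contrapositive of one half of the equivalence), I would take the separating counting functionals supplied at each scale $\rho_n\to 0$ and, exactly as in the construction built around the set $W$ and inequality (\ref{pr21}) for the point $f_0=0$, assemble a set $W\supset U$ whose Kolmogorov widths $d_i\to 0$; this forces $d_i(U)\le d_i(W)\to 0$, i.e. relative compactness of $U$. Conversely, if $U$ is relatively compact then $d_i(U)\to 0$, so projecting onto the finite-dimensional span of a net and estimating the coordinates of $f$ through the counting functionals produces a uniformly consistent estimator of $\|f\|_p$ at some rate $r_n\to 0$ (this is the Poisson form of the corollary following Theorem \ref{th1}); choosing $\rho_n\to 0$ with $\rho_n/r_n\to\infty$ and rejecting when the estimated norm exceeds $\rho_n/2$ gives uniform consistency of $V(\rho_n)$.

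The main obstacle is the interface between the two geometries, concentrated entirely in the Poisson-specific verifications of the second paragraph: uniform consistency is controlled by the Hellinger/affinity distance, whose leading term is the $\mathbb{L}_2(\nu)$ quadratic form, whereas the alternatives are deleted $\mathbb{L}_p$-balls. The delicate point is to confirm, uniformly over the possibly unbounded and non-compact parts of $U$, that (i) the Poisson likelihoods are uniformly integrable so that Le Cam--Schwartz--Ganssler applies, and (ii) the affinity $\int_S(\sqrt{1+f}-1)^2\,d\nu$ is comparable to $\int_S f^2\,d\nu$; once these are secured the affinity identity matches the density case term for term, and the internal-point hypothesis, the existence of bounded representatives in $B_\rho(u)\cap U$, and the width similarity inequalities $c\,d_i\le d_{ui}\le C\,d_i$ let the entire Theorem \ref{th1} construction transfer without further change.
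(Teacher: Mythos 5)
Your proposal takes essentially the same route as the paper: the paper's entire proof of Theorem \ref{thp1} is to repeat the proof of Theorem \ref{th1} verbatim, substituting for Theorem \ref{th2} its Poisson analogue (Theorem 5.9 of \cite{er15}), which is exactly your plan. The only difference is that you re-derive that Poisson analogue from Le Cam--Schwartz \cite{les} and Ganssler's Lemma 2.3 \cite{ga} (via the Poisson affinity identity and moment formula, i.e.\ uniform integrability of the likelihoods) instead of citing it, and this is the same machinery on which Theorem \ref{th2} itself rests.
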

Proof of Theorem \ref{thp1} completely coincides with the proof of Theorem \ref{th1}. It suffices only to implement  Theorem 5.9 in \cite{er15} instead of Theorem \ref{th2}.
\subsection{Signal detection  in Gaussian white noise \label{s2.2}}

 Suppose we  observe a realization of stochastic process $Y_\epsilon(t), t \in (0,1)$, defined by the stochastic differential equation
$$
dY_\varepsilon(t) = S(t) dt + \varepsilon dw(t), \quad \varepsilon > 0,
$$
where $S\in \mathbb{L}_2(0,1)$ is unknown signal and $dw(t)$ is Gaussian white noise.

Let $U $ be bounded convex set in $\mathbb{L}_2(0,1)$. We say that $S_0 \in U$ is internal point of convex set $U$ if, for any $S \in U$, there is $\lambda <0$ such that $S_0 +  \lambda (S - S_0) \in U$.

Denote $B_r(S)$ ball in $\mathbb{L}_p$, $p \ge 2$, having radius $r>0$ and center $S \in \mathbb{L}_2$.

 We wish to test a hypothesis
 \begin{equation*}
 \mathbb{H}_0\,\,:\,\, S(t)  = S_0(t) \equiv  0, \quad t \in [0,1],
 \end{equation*}
 versus alternatives
 \begin{equation*}
 \mathbb{H}_\varepsilon\,\,:\,\, S \in V(\rho_\varepsilon) = U \setminus B_{\rho_\varepsilon}(S_0)\quad \rho_\varepsilon > 0.
 \end{equation*}
 For any test $K_\varepsilon$, $\varepsilon > 0$, denote $\alpha(K_\varepsilon) = \mathbf{E}_0[K_\varepsilon]$ its type I error probability and denote $\beta(K_\varepsilon,S)$ its type II error probability for alternative $S \in V(\rho_\varepsilon)$.

 We say that family of tests $K_\varepsilon$, $ 0 < \alpha(K_\varepsilon) \le \alpha <1$, $\varepsilon > 0$, is uniformly consistent if there is $\varepsilon_0 >0$  and $\delta > 0$ such that, for all $0 < \varepsilon < \varepsilon_0$, we have
 \begin{equation*}
 \beta(K_\varepsilon,V(\rho_\varepsilon)) = \sup_{S  \in V(\rho_\varepsilon)} \beta(K_\varepsilon,S) < 1 - \alpha- \delta.
 \end{equation*}
\begin{thm} \label{ths1}  Let $ U$ be bounded convex set in $\mathbb{L}_p$, $p \ge 2$. Let $S_0$ be internal point of $U$. Then there is  $\rho_\varepsilon \to 0$ as $\varepsilon \to 0$ such that  the sets of alternatives $V(\rho_\varepsilon)$ are uniformly consistent, if and only if, the set $U$ is relatively compact.
 \end{thm}
Proof of Theorem \ref{thp1} completely coincides with proof of Theorem \ref{th1}. It suffices to implement  only Theorem 5.3 in \cite{er15} instead of Theorem \ref{th2}.
\subsection{Hypothesis testing on a solution of linear ill-posed problem \label{s2.4}}
In Hilbert space $\mathbb{H}$
  we observe a realization of   Gaussian random vector
 \begin{equation}\label{il1}
  Y_\varepsilon = A S + \varepsilon \xi,  \quad S \in \mathbb{H}, \quad \varepsilon > 0.
  \end{equation}
  Hereafter $A: \mathbb{H} \to \mathbb{H}$ is known  linear operator and $\xi$ is Gaussian random vector having known covariance operator $R: \mathbb{H} \to \mathbb{H}$ and
  $\mathbf{E}\xi = 0$.

  Denote $B_r(S)$ ball in $\mathbb{H}$,  having radius $r>0$ and center $S \in \mathbb{H}$.

  Let $U $ be bounded convex set in $\mathbb{H}$.

We wish to test a hypothesis
 \begin{equation*}
 \mathbb{H}_0\,\,:\,\, S  = S_0 =  0,
 \end{equation*}
 versus alternatives
 \begin{equation*}
 \mathbb{H}_\varepsilon\,\,:\,\, S \in V_(\rho_\varepsilon) = U \setminus B_{\rho_\varepsilon}(S_0), \quad \rho_\varepsilon > 0.
 \end{equation*}
 Below  we use the same definition of uniform consistency and similar definition of interior point as in subsection \ref{s2.2}.

For any operator $K: \mathbb{H} \to \mathbb{H}$ denote $\frak{R}(K)$ the rangespace of $K$.

  Suppose that the nullspaces of $A$ and $R$ equal  zero and $\frak{R}(A) \subseteq \frak{R}(R^{1/2})$.

\begin{thm}\label{t21} Let the operator $R^{-1/2}A$ be bounded in $\mathbb{H}$. Let $S_0$ be internal point of $U$. Then there is  $\rho_\varepsilon \to 0$ as $\varepsilon \to 0$ such that  sets of alternatives $V(\rho_\varepsilon)$ are uniformly consistent, if and only if, the set $U$ is relatively compact.\end{thm}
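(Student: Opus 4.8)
The plan is to reduce the ill-posed model to the Gaussian white-noise detection problem of Subsection \ref{s2.2} by a whitening transformation, and then to repeat the proof of Theorem \ref{th1} essentially verbatim, with the white-noise criterion (Theorem 5.3 in \cite{er15}) playing the role of Theorem \ref{th2}. Since the nullspace of $R$ is zero, $R^{-1/2}$ is a densely defined injective operator, and since the range of $A$ lies in the range of $R^{1/2}$ with $B\doteq R^{-1/2}A$ bounded, the condition $\frak{R}(A)\subseteq\frak{R}(R^{1/2})$ guarantees $\mathbf{P}_S\ll\mathbf{P}_0$ and applying $R^{-1/2}$ to (\ref{il1}) produces the statistically equivalent experiment
\begin{equation*}
\tilde Y_\varepsilon = B S + \varepsilon\,\eta, \qquad B = R^{-1/2}A,
\end{equation*}
in which $\eta = R^{-1/2}\xi$ is standard Gaussian white noise (covariance operator $I$). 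The operator $B$ is bounded and, because the nullspaces of $A$ and $R$ vanish, injective. Consequently the detectability of an alternative is governed by $\|B(S-S_0)\|/\varepsilon$, the deleted ball $B_{\rho_\varepsilon}(S_0)$ is carried into the shrinking anisotropic ellipsoid $B(B_{\rho_\varepsilon}(S_0))$ of diameter at most $2\|B\|\rho_\varepsilon$, and injectivity of $B$ makes $S\notin B_{\rho_\varepsilon}(S_0)$ equivalent to $BS$ lying outside this ellipsoid; thus the transformed problem differs from that of Subsection \ref{s2.2} only through the anisotropy created by $B$.

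For sufficiency, assume $U$ relatively compact. Its Kolmogorov widths obey $d_N\doteq d_N(U)\to 0$, so for each $N$ there is a finite-dimensional subspace $F_N$ with $\sup_{S\in U}\mathrm{dist}(S,F_N)\le d_N$. Decomposing $S-S_0 = u+v$ with $u=P_{F_N}(S-S_0)$ and $\|v\|\le 2d_N$, every $S\in V(\rho_\varepsilon)$ with $\rho_\varepsilon>4d_N$ has $\|u\|>\rho_\varepsilon/2$. As $B$ is injective and $F_N$ finite-dimensional, $\mu_N\doteq\inf\{\|Bw\|:w\in F_N,\ \|w\|=1\}>0$ and
\begin{equation*}
\|B(S-S_0)\|\ \ge\ \mu_N\|u\|-\|B\|\,\|v\|\ \ge\ \tfrac12\mu_N\rho_\varepsilon-2\|B\|\,d_N .
\end{equation*}
I would then take $N=N(\varepsilon)\to\infty$ slowly enough that $\varepsilon/\mu_{N(\varepsilon)}\to 0$ (possible since each $\mu_N>0$) and $d_{N(\varepsilon)}\to 0$, and choose $\rho_\varepsilon\to 0$ with $\rho_\varepsilon/d_{N(\varepsilon)}\to\infty$ and $\mu_{N(\varepsilon)}\rho_\varepsilon/(\varepsilon\,N(\varepsilon)^{1/4})\to\infty$. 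Then $\|B(S-S_0)\|$ dominates the $\sqrt{N(\varepsilon)}\,\varepsilon$ fluctuations of the noise on $BF_{N(\varepsilon)}$, so the chi-square test based on the projection $P_{BF_{N(\varepsilon)}}\tilde Y_\varepsilon$ is uniformly consistent; equivalently, this verifies that the weak closures of hypothesis and alternative are disjoint, whence the white-noise analogue of Theorem \ref{th2} yields uniformly consistent tests.

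For necessity I would argue by contraposition. If $U$ is not relatively compact then $d_N(U)\ge\delta>0$ for all $N$, and I claim $V(\rho_\varepsilon)$ is uniformly inconsistent for every $\rho_\varepsilon\to 0$. Non-compactness furnishes, after centering at the weak limit of a $\delta$-separated sequence and using that $S_0=0$ is internal to $U$, an almost-orthonormal family of directions $\{e_k\}$ and $\gamma>0$ with $S_k\doteq\gamma e_k\in U$ and $S_k\rightharpoonup 0$, hence $BS_k\rightharpoonup 0$; passing to a subsequence the off-diagonal inner products $\langle BS_k,BS_j\rangle$ become negligible. For small $\varepsilon$ one has $\|S_k\|=\gamma>\rho_\varepsilon$, so $S_k\in V(\rho_\varepsilon)$, while boundedness of $B$ gives $\|BS_k\|\le\|B\|\gamma\doteq c_0$. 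Taking $M=M(\varepsilon)=\lceil e^{c_0^2/\varepsilon^2}\rceil$ and the uniform mixture prior $\pi=M^{-1}\sum_{k\le M}\delta_{S_k}$, the chi-square divergence in the white-noise model satisfies
\begin{equation*}
\mathbf{E}_0\!\left[\Big(\tfrac{dP_\pi}{dP_0}\Big)^{2}\right]
=\frac1{M^2}\sum_{k,j}\exp\!\Big(\tfrac{\langle BS_k,BS_j\rangle}{\varepsilon^2}\Big)\ \le\ C ,
\end{equation*}
because the $M$ diagonal terms give $\tfrac1M e^{\|BS_k\|^2/\varepsilon^2}\le 1$ (as $\|BS_k\|^2\le c_0^2\le\varepsilon^2\log M$) and the off-diagonal terms are uniformly negligible after the selection. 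Thus $P_\pi$ stays at bounded distance from $P_0$ and no test separates them -- the Gaussian indistinguishability mechanism of \cite{ih,bu} -- so $V(\rho_\varepsilon)$ is uniformly inconsistent, proving the contrapositive.

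The main obstacle is exactly the ill-posedness: since $B=R^{-1/2}A$ need not be bounded below, $\|S-S_0\|>\rho_\varepsilon$ does not by itself make $\|B(S-S_0)\|$ detectable, which is why compactness enters decisively. Boundedness of $B$ is used both to control the infinite-dimensional tail $Bv$ in the sufficiency estimate and, together with injectivity, to secure the positive lower bound $\mu_N$ on each $F_N$; in the necessity direction it keeps $\|BS_k\|$ bounded, so that a finite mixture of size $M(\varepsilon)$ already washes out the signal. The delicate step is the simultaneous balancing of the dimension $N(\varepsilon)$, the radius $\rho_\varepsilon$ and the singular-value decay $\mu_N\to 0$; once this is carried out, the argument coincides with that of Theorem \ref{th1}.
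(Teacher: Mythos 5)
Your whitening reduction matches the paper's first (and essentially only explicit) step --- the paper disposes of the theorem by applying Theorem \ref{ths1} to the transformed alternatives $R^{-1/2}A\,V(\rho_\varepsilon)$ and pulling back with $A^{-1}R^{1/2}$ --- but your execution of the sufficiency direction has a genuine gap. Writing $B=R^{-1/2}A$, your detection bound $\|B(S-S_0)\|\ge\tfrac12\mu_N\rho_\varepsilon-2\|B\|d_N$ is informative only when $\mu_N\rho_\varepsilon\ge c\,\|B\|\,d_N$, a condition you never impose (you only require $\rho_\varepsilon/d_{N(\varepsilon)}\to\infty$), and one that in general cannot be met by any $\rho_\varepsilon\to 0$: the width $d_N$ of $U$ and the minimal singular value $\mu_N$ of $B$ on $F_N$ are unrelated quantities, and $\mu_N$ may decay arbitrarily faster than $d_N$. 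Concretely, let $U$ be the closed convex hull of $\{\pm e^{-i}e_i\}$ (compact, symmetric, $0$ internal) and $Be_i=e^{-i^2}e_i$ (bounded, injective); for the width-achieving subspaces $F_N=\mathrm{span}(e_1,\dots,e_N)$ one has $d_N/\mu_N= e^{N^2-N-1}\to\infty$, so $\mu_N\rho_\varepsilon\ge c\,d_N$ forces $\rho_\varepsilon\to\infty$, and one can check that tilting the subspaces does not help, since any subspace approximating $U$ well must essentially contain the bad directions $e_m$. So your recipe produces a vacuous (negative) signal lower bound in a case where the theorem is true. The repair is to use compactness where it actually bites, namely in the image space: by injectivity of $B$ and compactness of $\bar U$, $\omega(\rho)\doteq\inf\{\|BS\|:S\in\bar U,\ \|S\|\ge\rho\}>0$ for every $\rho>0$; moreover $B\bar U$ is compact, so $BV(\rho_\varepsilon)\subset BU\setminus B_{\omega(\rho_\varepsilon)}(0)$ with $BU$ compact convex and $0$ internal, and Theorem \ref{ths1} applied to $BU$ plus a diagonalization in $\rho_\varepsilon$ finishes the proof. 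This is exactly the step your closing paragraph flags as ``delicate'' but does not resolve, and exactly what the paper's reduction accomplishes.

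The necessity direction is structurally sound (weakly null sequence in $U$ with norms bounded below, obtained from a $\delta$-separated sequence via convexity and the internal-point property; Gaussian mixture; second-moment computation), but it falls quantitatively short of the paper's definition of uniform inconsistency. With $M=\lceil e^{c_0^2/\varepsilon^2}\rceil$ your bound gives $\mathbf{E}_0[(dP_\pi/dP_0)^2]\le C$ with $C\approx 2$, hence only $\alpha+\beta\ge 1-\tfrac12\sqrt{C-1}$, which does not exclude tests with $\beta<1-\alpha-\delta$. You need the $\chi^2$-divergence to tend to $0$, so that the total variation distance tends to $0$ and $\alpha+\beta\to 1$; the same construction delivers this if you take $M\gg e^{c_0^2/\varepsilon^2}$ (say $M=\lceil\varepsilon^{-1}e^{c_0^2/\varepsilon^2}\rceil$) and, for each $\varepsilon$, select points with pairwise $|\langle BS_k,BS_j\rangle|=o(\varepsilon^2)$, both available because the sequence is infinite and weakly null. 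Two further small points: the weak limit of the separated sequence lies in $\bar U$, not necessarily in $U$, so the recentering that uses the internal-point property needs an approximation step inside $U$; and the selection of the $M(\varepsilon)$ points must be allowed to depend on $\varepsilon$, which your phrase ``passing to a subsequence'' obscures. With these repairs the necessity half stands; the sufficiency half, as written, does not.
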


Proof of Theorem \ref{thp1} completely coincides with proof of Theorem \ref{th1}. It suffices to implement  only Theorem 5.4 in \cite{er15} instead of Theorem \ref{th2}

\subsection{Hypothesis testing on an operator known with a random error}
Let $\Theta$ be the set of all bounded linear operators $\mathbb{H }\to \mathbb{H}$ where $\mathbb{H}$ is  separable Hilbert space.

Let $A \in \Theta$ be an unknown linear operator.

Let for all $S \in H$ we can observe random Gaussian vector
\begin{equation*}
Y_{7\varepsilon,S} = A\,S + \epsilon\xi_S
\end{equation*}
with $\mathbf{E} \xi_S = 0$ and $\mathbf{E}<\xi_{S_1},\xi_{S_2}> = <S_1,S_2>$ where $<S_1,S_2>$ is inner product of vectors $S_1,S_2 \in \mathbb{H}$.

Let $U$ be convex set in $\Theta$.

We say that $A_1\in  U$ is internal point of $U$ if, for any $A \in U$, there is $\lambda < 0$ such that $A_1 + \lambda (A- A_1) \in U$.

Denote $A_0 \in \Theta$ linear operator such that, for any $S \in \mathbb{H}$, we have $A_0 S = 0$.

For Hilbert-Schmidt norm in $\Theta$, denote $B(r)$ ball in $\Theta$ having radius $r> 0$ and center $A_0$.

Let $A_0$ be interior point of $U$.

The problem is to test  hypothesis $H_0\,:\, A = A_0$ versus alternatives $H_1\,:\, A \in V(\rho_\varepsilon)\doteq U\setminus B(\rho_\varepsilon)$ with $\rho_\varepsilon > 0$.

The definition of uniform consistency is akin to one in subsection \ref{s2.2}.

 In the following Theorem we consider the boundedness and compactness for Hilbert-Schmidt norm.

 \begin{thm}\label{tho1} Let $U$ be bounded convex set and let $A_0$ be internal point of $U$. Then there is  $\rho_\varepsilon \to 0$ as $\varepsilon \to 0$ such that  sets of alternatives $V(\rho_\varepsilon)$ are uniformly consistent, if and only if, the set $U$ is relatively compact.
 \end{thm}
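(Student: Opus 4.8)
The plan is to reduce Theorem~\ref{tho1} to the Gaussian white noise problem of Subsection~\ref{s2.2}, in the same way that Theorem~\ref{t21} was reduced to Theorem~\ref{ths1} by a change of the ambient Hilbert space. The basic point is that, when $\mathbb{H}$ is separable with orthonormal basis $\{e_i\}$, the Hilbert--Schmidt operators
\begin{equation*}
\mathcal{S}_2(\mathbb{H}) = \Bigl\{\, B \in \Theta \,:\, \|B\|_{HS}^2 = \sum_{i,j}|\langle B e_i, e_j\rangle|^2 < \infty \,\Bigr\}
\end{equation*}
themselves form a separable Hilbert space for the inner product $\langle B, C\rangle_{HS} = \sum_{i,j}\langle B e_i, e_j\rangle\langle C e_i, e_j\rangle$, with the rank-one operators $e_i\otimes e_j : x \mapsto \langle x, e_i\rangle e_j$ as an orthonormal basis. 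Fixing an isometric isomorphism $\mathcal{S}_2(\mathbb{H})\cong \mathbb{L}_2$ transports the Hilbert--Schmidt norm to the $\mathbb{L}_2$ norm, so that boundedness, convexity, relative compactness, the balls $B(\rho_\varepsilon)$, and the notion of internal point all pass between the two sides. Thus $U$ becomes a bounded convex subset of $\mathbb{L}_2$ having $A_0 = 0$ as an internal point.

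Next I would exhibit the statistical experiment as signal detection on $\mathcal{S}_2(\mathbb{H})$. For each $B \in \mathcal{S}_2(\mathbb{H})$ form the linear statistic
\begin{equation*}
W_\varepsilon(B) = \sum_i \langle Y_{\varepsilon, e_i}, B e_i\rangle = \langle A, B\rangle_{HS} + \varepsilon\, w(B), \qquad w(B) = \sum_i \langle \xi_{e_i}, B e_i\rangle ,
\end{equation*}
noting that by linearity of $S \mapsto Y_{\varepsilon, S}$ the family $\{Y_{\varepsilon, e_i}\}_i$ is sufficient, so that $\{W_\varepsilon(B)\}_{B}$ carries all the information in the data. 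The coordinate choice $B = e_i\otimes e_j$ recovers $W_\varepsilon(e_i\otimes e_j) = a_{ij} + \varepsilon\, w(e_i\otimes e_j)$, where $a_{ij} = \langle A e_i, e_j\rangle$ are the coordinates of $A$ and $\|A\|_{HS}^2 = \sum_{i,j} a_{ij}^2$. To complete the identification with the model of Subsection~\ref{s2.2}, I would verify that the assumption $\mathbf{E}\langle \xi_{S_1}, \xi_{S_2}\rangle = \langle S_1, S_2\rangle$ forces $w$ to be an isonormal Gaussian process on $\mathcal{S}_2(\mathbb{H})$, i.e.\ $\mathbf{E}[w(B) w(C)] = \langle B, C\rangle_{HS}$; once this is in place, $\{W_\varepsilon(B)\}$ is exactly Gaussian white noise of level $\varepsilon$ with drift $A$ on the Hilbert space $\mathcal{S}_2(\mathbb{H}) \cong \mathbb{L}_2$, with $p = 2$.

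With both reductions in hand, Theorem~\ref{ths1} (equivalently Theorem~5.3 in \cite{er15}) applies verbatim to the image experiment: there is a sequence $\rho_\varepsilon \to 0$ for which the image alternatives are uniformly consistent if and only if the image of $U$ is relatively compact in $\mathbb{L}_2$. Transporting this conclusion back through the isometry yields exactly the statement of Theorem~\ref{tho1}, since relative compactness in $\mathbb{L}_2$ corresponds to relative compactness in the Hilbert--Schmidt norm. When $\mathbb{H}$ is finite-dimensional the assertion is trivial, because every bounded subset of the finite-dimensional space $\mathcal{S}_2(\mathbb{H})$ is relatively compact and uniformly consistent tests then exist by the same reduction.

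The one step requiring genuine care, rather than bookkeeping, is the identification of the noise. The hypothesis $\mathbf{E}\langle \xi_{S_1}, \xi_{S_2}\rangle = \langle S_1, S_2\rangle$ specifies only the Gram structure of the $\mathbb{H}$-valued noise family, whereas the reduction needs the full second-order structure of the induced process $w$ on $\mathcal{S}_2(\mathbb{H})$ to be that of white noise, $\mathbf{E}[w(B) w(C)] = \langle B, C\rangle_{HS}$. Pinning down the joint covariance of $\{\xi_S\}_{S \in \mathbb{H}}$ to this end, and checking that the resulting experiment is statistically equivalent to, not merely less informative than, the white-noise signal-detection experiment, is where the argument has content. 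Everything after that is transport of the convex-geometric and compactness structure along the isometry, and the compactness dichotomy is then inherited unchanged from Theorem~\ref{ths1}.
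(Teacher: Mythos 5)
Your route is not the paper's: the paper proves Theorem~\ref{tho1} by rerunning the hyperplane-and-width construction from the proof of Theorem~\ref{th1}, with Theorem~5.7 of \cite{er15} supplying the separating criterion in place of Theorem~\ref{th2}; you instead reduce to Theorem~\ref{ths1} through an isometry $\mathcal{S}_2(\mathbb{H})\cong\mathbb{L}_2$, in the style of the paper's proof of Theorem~\ref{t21}. That plan could in principle work, but the step you explicitly defer --- showing that the induced process $w$ on $\mathcal{S}_2(\mathbb{H})$ is white noise and that the experiments are equivalent --- is not a bookkeeping detail: it is a genuine gap, and in the form you state it, it is false.

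Here is the concrete obstruction. Isonormality of $w(B)=\sum_i\langle\xi_{e_i},Be_i\rangle$ would require, taking $Bx=\langle x,e_1\rangle u$, that $\mathbf{E}\,\langle\xi_{e_1},u\rangle^2=\|u\|^2$ for every $u\in\mathbb{H}$, i.e.\ that $\xi_{e_1}$ have identity covariance, hence $\mathbf{E}\|\xi_{e_1}\|^2=\infty$; but the paper's assumption with $S_1=S_2=e_1$ gives $\mathbf{E}\|\xi_{e_1}\|^2=1$. So the stated condition $\mathbf{E}\langle\xi_{S_1},\xi_{S_2}\rangle=\langle S_1,S_2\rangle$ does not ``force'' isonormality --- it is incompatible with it; it fixes only the traces of the cross-covariances and leaves the joint law of $\{\xi_S\}$ undetermined (your sufficiency claim similarly needs linearity of $S\mapsto\xi_S$, which is nowhere assumed). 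Moreover the indeterminacy is statistically essential, so no argument can close this gap from the literal hypotheses: the family $\xi_S=\sum_i\langle S,e_i\rangle g_ie_i$ with $g_i$ i.i.d.\ standard normal satisfies the stated condition, yet then $\langle Y_{\varepsilon,e_i},e_j\rangle=\langle Ae_i,e_j\rangle$ exactly for $j\neq i$, so for $U$ the (bounded, convex, noncompact) set of zero-diagonal operators of Hilbert--Schmidt norm at most one, $\|A\|_{HS}$ is observable without error and $V(\rho_\varepsilon)$ is uniformly consistent for every $\rho_\varepsilon>0$; with such noise the compactness dichotomy itself fails. To make your reduction (and indeed the theorem) work, one must first fix the model by reading the noise assumption as the tensor condition $\mathbf{E}[\langle\xi_{S_1},u_1\rangle\langle\xi_{S_2},u_2\rangle]=\langle S_1,S_2\rangle\langle u_1,u_2\rangle$, i.e.\ cylindrical white noise --- which is what the appeal to Theorem~5.7 of \cite{er15} implicitly supplies; under that reading your isometry argument does go through and is a legitimate alternative to the paper's proof.
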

The reasoning are akin to proof of Theorem 2.1. It suffices only to implement Theorem 5.7 in \cite{er15}.
\section{Proof of Theorem \ref{th1}}

 Suppose that there is uniformly consistent sequence of alternatives $V(\rho_n)$, $\rho_n \to 0$ as $n \to \infty$. Then we implement Theorem \ref{th2} and point out compact set $W$ such that $ U \subset W$.

Fix $\varepsilon_1 > 0$. For each $ i > 1$, we put $\varepsilon_i = \frac{\varepsilon_{i-1}}{2}$.

By Theorem \ref{th2}, for each $i$, there are functions $g_{i1},\ldots,g_{ik_i} \in \mathbb{L}_q$, $\|g_{ij}\|_q =1$ for $1 \le j \le k_i$, $\frac{1}{p} + \frac{1}{q} =1$, and constants $c_{i1},\ldots,c_{ik_i}$ such that
\begin{equation}\label{pr1}
V(\varepsilon_i) \subset \cup_{l=1}^i\cup_{j=1}^{k_l} \Pi(g_{lj},c_{lj})
\end{equation}
and
\begin{equation}\label{pr2}
0 \notin \cup_{l=1}^i\cup_{j=1}^{k_l} \Pi(g_{lj},c_{lj}),
\end{equation}
where
\begin{equation*}
 \Pi(g_{lj},c_{lj}) = \Bigl\{ \, f \,:\, \int_S g_{lj}\, f\, d \, \mathbf{P}_0  > c_{lj}, f \in \mathbb{L}_p\Bigr\}.
\end{equation*}
Denote $\Gamma(g_{lj},c_{lj})$ the boundary hyperplane of  $\Pi(g_{lj},c_{lj})$.

Define sets $G_i = \{g_{i1},\ldots,g_{ik_i}\}$, $ 1 \le i < \infty$.

It is clear, that we can suppose additionally that, if $g \in G_i$, then $-g \in G_i$, $ 1\le i < \infty$. Otherwise we can add such functions. Moreover, we can suppose that, if $g_{ij_1} = - g_{ij_2}$, $1 \le j_1 \ne j_2 \le k_i$, then $ c_{ij_1} = c_{ij_2}$.

We can also suppose that $g_{ij}$ are step functions and therefore $g_{ij} \in \mathbb{L}_p\cap \mathbb{L}_q$, $1 \le j  \le k_i$, $1 \le i < \infty$. Otherwise we can replace functions functions $g_{ij}$ by their step approximations and change constants $c_{ij}$ insignificantly. We can also choose functions $g_{ij}$ such that function $g_{i_1j_1}$ is orthogonal to function $g_{i_2,j_2}$ in  $\mathbb{L}_2$, for any $ 1 \le i_1 \ne i_2 < \infty$ and for any $1 \le j_1 < k_{i_1}$, $1 \le j_2 < k_{i_2}$. Suppose also that $\|g_{ij}\|_q = 1$  for $1 \le j \le k_i$, $ 1\le i < \infty$.

For $i$, $1 \le i  < \infty$, denote $M_i$ linear subspace generated functions $g_{i1}, \ldots, g_{ik_i}$.

Denote $B(M_i,\rho)$ ball in subspace $M_i$ of $\mathbb{L}_p$ having center zero and radius $\rho > 0$.

Denote $\Omega_i = M_i \cap \cap_{j=1}^{k_i}\bar\Pi (g_{ij},\varepsilon_{i-1})$ where
$\bar\Pi(g_{ij},d_{1j}) = \mathbb{L}_p\setminus \Pi(g_{ij},d_{1})$.

Fix $\delta > 0$.   We can take the number of functions $g_{ij}$, $\|g_{ij}\|_q =1$, $1 \le j \le k_i$, so large that, for any $f \in \Omega_i$, we have
\begin{equation}\label{toto}
\|f\|_p < \frac{\varepsilon_{i-1}}{1- \delta}.
\end{equation}
Denote $d_{i} \doteq \frac{\varepsilon_{i-1}}{1- \delta}$.

For definition of set $W$ we implement induction by $i$.

Since  set $U$ is bounded there is $d_{1}$ such that
for any $j$, $1 \le j \le k_1$, and, for any $f \in U$, we have $f \in \bar\Pi(g_{1j},d_{1})$.

Let $v \in U \setminus B_{\varepsilon_1}(0)$, $\int_S g_{11} v d\,\mathbf{P}_0 > c_{11}$.
Then there is $\lambda < 0$ such that $h \doteq \lambda v \in U$ and $1 + \lambda v$ is a density.

Denote $\Pi_0 = \Bigl\{\,f\,:\, \int_S g_{11} f d\,\mathbf{P}_0 > 0, f \in \mathbb{L}_p\Bigr\}$.
Let $\Gamma_0$ be the boundary hyperplane of $\Pi_0$.

Denote $W_1 = \cap_{j=1}^{k_1} \bar\Pi (g_{1j},d_{1})$.

Let $i > 1$. For each $j$, $1 \le j\le k_i$, we draw  hyperplane $\Upsilon_{ij}$ passing through the point $h$ and $\Gamma(g_{ij},d_{i})\cap \Gamma_0$. Denote $\Psi_{i,j}$ the set having hyperplane  $\Upsilon_{i,j}$ as a boundary and containing $(U\setminus B(\varepsilon_{i-1})) \cap \Pi_0$.

Denote $W_i = \cap_{j=1}^{k_i} \Psi_{ij} \cap W_{i-1} $.

Then $U \cap \Pi_0\subset W_i$.

Denote $L_i$ linear space generated functions from linear spaces $M_1,\ldots,M_i$.

Let $R>0$ be such that $U \subset B_R(0)$.

Using triangle similarity considerations and (\ref{toto}), we have
\begin{equation} \label{pr21}
\begin{split} &
\sup_{f}\,\,\,\inf_{f_1 }\,\, \{\|f - f_1\|_p\,:\,f \in \Upsilon_{i,j}\cap W_i\cap L_i,\,\, f_1 \in W_{i-1}\cap L_{i-1} \,\} \\&
\le
 |\lambda|^{-1} R\,\sup_{f} \{\|f - f_1\|_p\,:\,f \in \Upsilon_{i,j}\cap W_i\cap L_i \cap \Gamma_0, f_1 \in W_{i-1}\cap L_{i-1}\cap\Gamma_0\} \\&
\le d_{i} |\lambda|^{-1} R .
\end{split}
\end{equation}
for any $i$, $1 < i < \infty$, and any $j$, $1 \le j \le k_i$.

Denote $W$ the closure of $\cap_{i=1}^\infty W_i$. By (\ref{pr21}), $n$-width of W tends to zero, therefore $W$ is relatively compact. Since $U\cap \Pi_0 \subset W$ then $U\cap \Pi_0 $ is  relatively compact as well. The case $U\setminus\Pi_0$ is explored similarly.

\end{document}